\newtheorem{thm}{Theorem}[section]
\newtheorem{lemma}[thm]{Lemma}
\theoremstyle{definition}
\newtheorem{remark}[thm]{Remark}
\def\XXint#1#2#3{{\setbox0=\hbox{$#1{#2#3}{\int}$}
         \vcenter{\hbox{$#2#3$}}\kern-.5\wd0}}
\def\R{\mathbb{R}}
\def\C{\mathbb{C}}
\def\e{\varepsilon}
\numberwithin{equation}{section}
\begin{document}

\title{Extrapolation for the $L^p$ Dirichlet Problem \\ in Lipschitz domains}

\author{Zhongwei Shen\thanks{Supported in part by NSF grant DMS-1600520.}}
\date{}
\maketitle

\begin{abstract}

Let $\mathcal{L}$ be a second-order linear elliptic operator with complex  coefficients.
We show that if the $L^p$ Dirichlet problem for the elliptic system $\mathcal{L}(u)=0$ in a fixed Lipschitz domain
$\Omega$ in $\R^d$ is solvable   for some $1<p=p_0< \frac{2(d-1)}{d-2}$, then it is solvable for all $p$ satisfying
$$
p_0<p< \frac{2(d-1)}{d-2} +\e.
$$
The proof is based on a real-variable argument.
It only requires that local solutions of $\mathcal{L}(u)=0$ satisfy a boundary Cacciopoli inequality. 

\end{abstract}

\section{\bf Introduction}\label{section-1}

In this paper we consider the $L^p$ Dirichlet problem for an $m\times m$ second-order elliptic system,
\begin{equation}\label{DP}
\left\{
\aligned
\mathcal{L} (u) &=0& \quad & \text{ in } \Omega, \\
u& = f\in L^p (\partial\Omega; \mathbb{C}^m) &\quad & \text{ on } \partial\Omega,\\
N(u) & \in L^p(\partial\Omega),
\endaligned
\right.
\end{equation}
where $\Omega$ is a bounded Lipschitz domain in $\R^d$ and $N(u)$ denotes the (modified) nontangential 
maximal function of $u$.
The operator  $\mathcal{L}$ in (\ref{DP})  is a second-order linear elliptic operator with complex  coefficients.
It may contain  lower oder terms and needs not to be in divergence form.
Instead we shall impose  the following condition.
\begin{quote}

Let $r_0=\text{diam}(\Omega)$.
There exists constants $\kappa>0$ and $c_0>0$ such that the boundary Cacciopoli inequality 
\begin{equation}\label{BC}
\int_{B(x_0, r)\cap \Omega } |\nabla u|^2\, dx \le \frac{\kappa}{r^2} \int_{B(x_0, 2r)\cap\Omega} |u|^2\, dx
\end{equation}
holds,
whenever $x_0\in \partial\Omega$, $0<r<c_0 r_0$, and
  $u\in W^{1, 2}(B(x_0, 2r)\cap\Omega; \C^m)$ is a weak solution to $\mathcal{L}(u)=0$ in $B(x_0, 2r)\cap\Omega$ with
  $u=0$ on $B(x_0, 2r)\cap\partial\Omega$.
  
\end{quote}

\begin{thm}\label{main-theorem}
Let $\Omega$ be a fixed bounded Lipschitz domain in $\R^d$ and 
$
1<p_0< \frac{2(d-1)}{d-2}.
$
Let $\mathcal{L}$ be a second-oder linear elliptic  operator satisfying the condition (\ref{BC}).
Assume that for any $f\in C_0^\infty(\R^d; \C^m)$, there exists a weak solution $u\in W^{1, 2}(\Omega; \C^m)$
to $\mathcal{L}(u)=0$ in $\Omega$ such that
$u=f$ on $\partial\Omega$ in the sense of trace, and
$\| N(u)\|_{L^{p_0} (\partial\Omega)} \le C_0 \| f\|_{L^{p_0} (\partial\Omega)}$.
Then the weak solution $u$ satisfies the $L^p$ estimate
\begin{equation}\label{estimate-1.0}
\| N(u)\|_{L^{p} (\partial\Omega)} \le C \| f\|_{L^{p} (\partial\Omega)}
\end{equation}
for any $p$ satisfying
\begin{equation}\label{range-p}
p_0<p<\frac{2(d-1)}{d-2} +\e,
\end{equation}
where $\e>0$ depends only on $d$, $m$, $p_0$, $\kappa$, 
 $c_0$, $C_0$ and the Lipschitz character of $\Omega$.
The constant $C$ in (\ref{estimate-1.0}) depends on $d$, $m$, $p_0$,  $p$,
$\kappa$, $c_0$, $C_0$ and the Lipschitz character of $\Omega$.
\end{thm}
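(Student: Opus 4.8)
The plan is to deduce (\ref{estimate-1.0}) from a real-variable argument applied to $F=N(u)$ on $\partial\Omega$, with the boundary datum $|f|$ playing the role of the ``lower order'' term and $p_0$ the base exponent: one needs, on each surface ball, a decomposition of $N(u)$ into a ``good part'' bounded (in the $L^{p_0}$ average) by averages of $|f|$, and a ``remainder part'' bounded in a higher $L^{q_1}$ average, $q_1>\frac{2(d-1)}{d-2}$, by the $L^{p_0}$ average of $N(u)$ plus averages of $|f|$. Fix $\Delta=\Delta(x_0,r)=B(x_0,r)\cap\partial\Omega$ with $0<r<cr_0$, take $\eta\in C_0^\infty(B(x_0,8r))$ with $\eta\equiv1$ on $B(x_0,4r)$, let $\tilde u$ be the solution provided by the hypothesis with data $f\eta$, and set $w=u-\tilde u$. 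On $2\Delta$ we have $N(u)\le N(\tilde u)+N(w)$, so the decomposition is $F_\Delta=N(\tilde u)$, $R_\Delta=N(w)$. The bound for $F_\Delta$ is immediate from the assumed $L^{p_0}$ estimate: $\big(\dashint_{2\Delta}N(\tilde u)^{p_0}\,d\sigma\big)^{1/p_0}\le C\,\sigma(\Delta)^{-1/p_0}\|f\eta\|_{L^{p_0}(\partial\Omega)}\le C\big(\dashint_{C\Delta}|f|^{p_0}\,d\sigma\big)^{1/p_0}$.

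The heart of the matter is the remainder estimate. The function $w$ solves $\mathcal{L}(w)=0$ in $B(x_0,4r)\cap\Omega$ and vanishes on $B(x_0,4r)\cap\partial\Omega$, and here condition (\ref{BC}) enters decisively. Since $w$ vanishes on the boundary patch, the Sobolev--Poincar\'e inequality gives $\big(\dashint_{B(x_0,2\rho)\cap\Omega}|w|^2\big)^{1/2}\le C\rho\big(\dashint_{B(x_0,2\rho)\cap\Omega}|\nabla w|^{\frac{2d}{d+2}}\big)^{\frac{d+2}{2d}}$ for boundary balls; combining this with (\ref{BC}) produces a weak reverse H\"older inequality for $\nabla w$ at the exponent $\tfrac{2d}{d+2}<2$, and Gehring's lemma upgrades it to $\nabla w\in L^{2+\delta}_{\mathrm{loc}}$ with the self-improved inequality, $\delta>0$ depending only on the allowed data. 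Next, because $w$ vanishes on the boundary, a Poincar\'e estimate at every scale yields the pointwise bound $N(w)(x)^2\le C\sup_{t}t^{2-d}\int_{B(x,Ct)\cap\Omega}|\nabla w|^2\,dy$ for $x\in2\Delta$; that is, $N(w)^2$ is dominated by the fractional maximal function of order $2$ of the measure $|\nabla w|^2\,dy$. An Adams-type estimate for such maximal functions with respect to the $(d-1)$-dimensional surface measure gives $\|N(w)\|_{L^{q}(2\Delta,d\sigma)}\le C\,\bigl\||\nabla w|^2\bigr\|_{L^{t}(B(x_0,Cr)\cap\Omega)}^{1/2}$ whenever $\tfrac{q}{2}=\tfrac{(d-1)t}{d-2t}$: with $t=1$ this is exactly the critical exponent $q=\frac{2(d-1)}{d-2}$, and the surplus integrability $t=1+\delta'$ from Gehring pushes it up to some $q_1=q_1(\delta)>\frac{2(d-1)}{d-2}$. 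Plugging in the self-improved gradient bound, then using (\ref{BC}) once more to replace $r\big(\dashint|\nabla w|^2\big)^{1/2}$ by $\big(\dashint|w|^2\big)^{1/2}$, and finally a Fubini argument over nontangential cones to pass from $\big(\dashint_{B\cap\Omega}|w|^2\big)^{1/2}$ to a boundary average of $N(w)$, we arrive (after $N(w)\le N(u)+N(\tilde u)$ and the $L^{p_0}$ bound for $\tilde u$) at
\begin{equation*}
\Big(\dashint_{2\Delta}N(w)^{q_1}\,d\sigma\Big)^{1/q_1}\le C\Big(\dashint_{C\Delta}N(u)^{p_0}\,d\sigma\Big)^{1/p_0}+C\sup_{\Delta'\supseteq\Delta}\Big(\dashint_{\Delta'}|f|^{p_0}\,d\sigma\Big)^{1/p_0}.
\end{equation*}

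With both estimates in hand, the real-variable lemma gives $N(u)\in L^p(\partial\Omega)$ and (\ref{estimate-1.0}) for all $p_0<p<q_1$, which is the asserted range with $\e=q_1-\frac{2(d-1)}{d-2}$; the dependence of $\e$ and $C$ on $d,m,p_0,\kappa,c_0,C_0$ and the Lipschitz character is read off from the Sobolev--Poincar\'e, Gehring, Adams and Caccioppoli constants.

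I expect the remainder estimate to be the main obstacle, and within it two points require care. First, extracting the \emph{sharp} exponent $\frac{2(d-1)}{d-2}$, together with the quantitative gain beyond it, from the interaction of the bulk higher integrability of $\nabla w$ with the $(d-1)$-dimensional geometry of $\partial\Omega$: the Adams-type inequality and the attendant scaling bookkeeping must be carried out precisely, and the role of $t=1$ as a weak-type endpoint (vs.\ $t>1$ from Gehring) explains why the interval in (\ref{range-p}) is open. Second, and more delicate, producing the \emph{low} exponent $p_0$ on the right-hand side of the remainder estimate rather than the $L^2$ average that the energy inequality delivers directly; this is what makes the real-variable lemma yield the full interval $(p_0,q_1)$ and not merely its portion above $2$, and it is exactly here that the vanishing of $w$ on the boundary---through the sub-$2$ Sobolev--Poincar\'e exponent and the cone-Fubini step---must be exploited fully.
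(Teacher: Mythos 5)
Your global architecture is the paper's: decompose $u=\tilde u+w$ with $\tilde u$ solving the localized problem, feed $F=N(u)$, $F_Q=N(\tilde u)$, $R_Q=N(w)$ into the Caffarelli--Peral/Shen real-variable lemma, and reduce matters to a reverse H\"older inequality for $N(w)$ on surface balls. The remainder estimate, however, is where you diverge from the paper, and there are two genuine gaps in your route.

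\textbf{The Gehring step is outside the paper's hypotheses.} Your plan is to upgrade $\nabla w$ to $L^{2+\delta}$ via Sobolev--Poincar\'e $+$ Caccioppoli $+$ Gehring, and then to pay with the surplus $\delta$ in an Adams-type trace inequality. But Meyers--Gehring higher integrability needs a reverse H\"older inequality for $\nabla w$ at \emph{all} scales and locations, and for balls well inside $\Omega$ that is an \emph{interior} Caccioppoli inequality. The paper is explicitly constructed so that only the boundary Caccioppoli (\ref{BC}) is used (see the abstract and Remark 1.3, where the lower-order coefficients are allowed to blow up like $\delta(x)^{-1}$); nothing in the hypotheses gives you interior Caccioppoli, so this step is not available in the stated generality. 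The paper avoids Gehring altogether. Its substitute is the elementary identity, after Lemma 2.1 and Fubini,
\begin{equation*}
\int_{\Delta_r}|N_a^r(u)|^q\,d\sigma \le C\int_{D_{5ar}}|u|^q\,\delta^{-1}\,dy
\le C\Bigl(\int_{D_{5ar}}|u|^{2(q-1)}\Bigr)^{1/2}\Bigl(\int_{D_{5ar}}\frac{|u|^2}{\delta^2}\Bigr)^{1/2},
\end{equation*}
and then the Sobolev inequality (since $2(q-1)=\tfrac{2d}{d-2}$ at $q=\tfrac{2(d-1)}{d-2}$) plus the one-dimensional Hardy inequality (\ref{H-1}) bound both factors by $\int|\nabla u|^2$. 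This produces a \emph{strong-type} reverse H\"older at the critical exponent $q$ with no higher integrability of the gradient; the $\varepsilon$ of (\ref{range-p}) then comes from the self-improving property of a reverse H\"older inequality applied to $N(u)$ on the $(d-1)$-dimensional boundary, which is a purely boundary device and again avoids interior estimates.

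\textbf{The $L^2\to L^{p_0}$ passage on the right-hand side of the remainder estimate is missing.} Your chain (Adams $\to$ Gehring $\to$ Caccioppoli $\to$ cone--Fubini $\to$ $N(w)\le N(u)+N(\tilde u)$) produces, on the right, an $L^2$ average of $N(u)$ over a dilated surface ball, not an $L^{p_0}$ average. Since $p_0$ may be well below $2$, replacing $(\fint N(u)^2)^{1/2}$ by $(\fint N(u)^{p_0})^{1/p_0}$ is not automatic and cannot be deduced from the $L^{p_0}$ solvability of the Dirichlet problem without circularity. The paper handles this in the proof of Theorem \ref{RH-theorem}: it first establishes (\ref{RH-3}) with $L^2$ on the right and then runs a convexity/iteration argument (the logarithmic estimate on $I(t)$ over a nested family of surface balls) to reduce the right-hand side to an $L^1$ average, whence H\"older gives any $L^{p_0}$. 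You allude to this difficulty in your last paragraph, but the ``sub-$2$ Sobolev--Poincar\'e exponent'' and cone--Fubini you invoke do not supply this step; the iteration lemma (or an equivalent device) is an essential additional ingredient.

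These two points are precisely where the paper's argument is carefully tuned to the minimal hypothesis (\ref{BC}), and where the more familiar Meyers--Gehring/Adams route overspends. For operators for which interior Caccioppoli is free (e.g.\ bounded-coefficient divergence-form systems), your route would go through once the $L^2\to L^1$ iteration is added, but that is a strictly smaller class than the one covered by the theorem as stated.
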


We remark that in the scalar case $m=1$ with real coefficients, the maximum principle
$\| u\|_{L^\infty(\Omega)} \le \| u\|_{L^\infty(\partial\Omega)}$ holds for weak solutions of
$\mathcal{L}(u)=0$ in $\Omega$.
It follows by interpolation that if the estimate (\ref{estimate-1.0}) holds for $p=p_0$,
then it holds for any $p_0<p\le \infty$.
However, it is known that the maximum principle or its weak version $\| u\|_{L^\infty(\Omega)}
\le C \| u\|_{L^\infty(\partial\Omega)}$ is not available  in Lipschitz domains for elliptic systems
or scalar elliptic equations with complex coefficients.
Theorem \ref{main-theorem} provides a partial solution to this problem.

The analogous of Theorem \ref{main-theorem} also holds if $\Omega$ is the region above a 
Lipschitz graph,
\begin{equation}\label{g-domain}
\Omega=\big\{ (x^\prime, x_d)\in \R^d: \ x_d >\psi (x^\prime) \big\},
\end{equation}
where $\psi: \R^{d-1} \to \R$ is a Lipschitz function with $\|\nabla \psi\|_\infty\le M$.

\begin{thm}\label{main-theorem-1}
Let $\Omega$ be a fixed graph domain in $\R^d$, given by (\ref{g-domain}),  and 
$
1<p_0< \frac{2(d-1)}{d-2}.
$
Let $\mathcal{L}$ be a second-oder linear elliptic  operator satisfying the condition (\ref{BC}) with $r_0=\infty$.
Assume that for any $f\in C_0^\infty(\R^d; \C^m)$, there exists a weak solution $u\in W_{loc}^{1, 2}(\overline{\Omega}; \C^m)$
to $\mathcal{L}(u)=0$ in $\Omega$ such that
$u=f$ on $\partial\Omega$ in the sense of trace, and
$\| N(u)\|_{L^{p_0} (\partial\Omega)} \le C_0 \| f\|_{L^{p_0} (\partial\Omega)}$.
Then the weak solution $u$ satisfies the estimate (\ref{estimate-1.0})
for any $p$ satisfying (\ref{range-p}),
where $\e>0$ depends only on $d$, $m$, $p_0$, $\kappa$,  $C_0$ and $M$.
The constant $C$ in (\ref{estimate-1.0}) depends on $d$, $m$, $p_0$,  $p$,
$\kappa$, $C_0$ and $M$.
\end{thm}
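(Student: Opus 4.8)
The plan is to carry out the proof of Theorem~\ref{main-theorem} directly on the unbounded graph domain $\Omega$. The structural point is that (\ref{BC}) now holds at \emph{every} scale $0<r<\infty$, and that $\partial\Omega$, with the surface measure $\sigma$, is an unbounded Ahlfors $(d-1)$-regular space of homogeneous type whose constants depend only on $M$; since every ingredient below is scale- and translation-invariant, all constants will depend only on $d$, $m$, $p_0$, $\kappa$, $C_0$ and $M$. I would first record two reductions. By a routine density argument the $L^{p_0}$ solvability for data in $C_0^\infty(\R^d;\C^m)$ extends to data in $L^{p_0}(\partial\Omega;\C^m)$: for any such $f$ there is a weak solution $w\in W^{1,2}_{loc}(\overline{\Omega};\C^m)$ of $\mathcal L(w)=0$ in $\Omega$ with $w=f$ on $\partial\Omega$ and $\|N(w)\|_{L^{p_0}(\partial\Omega)}\le C\|f\|_{L^{p_0}(\partial\Omega)}$. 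Moreover it suffices to prove (\ref{estimate-1.0}) for $f\in C_0^\infty(\R^d;\C^m)$, the general case following by approximation.

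The main tool is a real-variable lemma of the type underlying Theorem~\ref{main-theorem}, now in its form on an unbounded space of homogeneous type: if $F,g\in L^{p_0}(\partial\Omega)$ with $g\ge0$, and if for every surface ball $\Delta=\Delta(Q_0,r)$ one has a decomposition $F\le F_\Delta+R_\Delta$ on $2\Delta$ with
\[
\Big(\dashint_{2\Delta}|F_\Delta|^{p_0}\Big)^{1/p_0}\le C_1\sup_{\Delta'\supseteq\Delta}\Big(\dashint_{\Delta'}g^{p_0}\Big)^{1/p_0}
\quad\text{and}\quad
\Big(\dashint_{2\Delta}|R_\Delta|^{q}\Big)^{1/q}\le C_2\Big(\dashint_{N_0\Delta}|F|^{p_0}\Big)^{1/p_0}+C_2\sup_{\Delta'\supseteq\Delta}\Big(\dashint_{\Delta'}g^{p_0}\Big)^{1/p_0}
\]
for some fixed $q>p_0$ and a fixed dilation constant $N_0$, then $\|F\|_{L^{p}(\partial\Omega)}\le C\|g\|_{L^{p}(\partial\Omega)}$ for all $p_0<p<q$. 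I would apply it with $F=N(u)$ and $g=|f|$, where $u$ is the solution with data $f\in C_0^\infty$. Fixing a surface ball $\Delta$, I use the extended solvability to pick a weak solution $u_1$ with data $f\,\mathbf 1_{N_0\Delta}$ and set $v:=u-u_1$, $F_\Delta:=N(u_1)$, $R_\Delta:=N(v)$. Linearity of $\mathcal L$ makes $v$ a weak solution of $\mathcal L(v)=0$ in $\Omega$ that vanishes on $N_0\Delta\cap\partial\Omega$, and subadditivity of $N$ gives $F=N(u)\le F_\Delta+R_\Delta$. The bound on $F_\Delta$ is immediate from the $L^{p_0}$ estimate: $\dashint_{2\Delta}|N(u_1)|^{p_0}\le\sigma(2\Delta)^{-1}\|N(u_1)\|_{L^{p_0}(\partial\Omega)}^{p_0}\le C\,\dashint_{N_0\Delta}|f|^{p_0}$.

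The crux is the estimate for $R_\Delta=N(v)$, i.e.\ the boundary reverse Hölder inequality
\[
\Big(\dashint_{\Delta}|N(v)|^{q}\,d\sigma\Big)^{1/q}\le C\Big(\dashint_{N_0\Delta}|N(v)|^{2}\,d\sigma\Big)^{1/2}
\]
for weak solutions $v$ of $\mathcal L(v)=0$ in $\Omega$ vanishing on $N_0\Delta\cap\partial\Omega$, valid for $2\le q<\tfrac{2(d-1)}{d-2}$; combined with $N(v)\le N(u)+N(u_1)=F+F_\Delta$ and Hölder's inequality (after enlarging $N_0$) this gives exactly the required bound on $R_\Delta$. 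To prove it I would extend $v$ by $0$ outside $\Omega$ and use (\ref{BC}) together with the Sobolev-Poincaré inequality for $W^{1,2}$ functions with zero trace on a boundary portion, obtaining at every scale and every point near $\Delta$ the interior-type reverse Hölder estimates $\big(\dashint_{B(x,t)\cap\Omega}|v|^{2^*}\big)^{1/2^*}\le C\big(\dashint_{B(x,2t)\cap\Omega}|v|^{2}\big)^{1/2}$ with $2^*=\tfrac{2d}{d-2}$; iterating these solid estimates and summing the outcome over the Whitney decomposition of $\Omega$ near $\Delta$ converts them into $L^q$ bounds for the modified nontangential maximal function of $v$ on $\Delta$, with $\tfrac{2(d-1)}{d-2}$ entering as the sharp Sobolev trace exponent of $W^{1,2}(\Omega)$ on the $(d-1)$-dimensional boundary. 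Finally, since this reverse Hölder inequality lives on the space of homogeneous type $(\partial\Omega,\sigma)$, a self-improvement of Gehring type raises the admissible exponent to $q<\tfrac{2(d-1)}{d-2}+\e$ for some $\e>0$ with the stated dependence.

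With both hypotheses of the real-variable lemma verified for each such $q$ (which are automatically $>p_0$), the lemma gives $\|N(u)\|_{L^p(\partial\Omega)}\le C\|f\|_{L^p(\partial\Omega)}$ for all $p_0<p<q$; letting $q\uparrow\tfrac{2(d-1)}{d-2}+\e$ covers the whole range (\ref{range-p}), and the density reduction then removes the restriction $f\in C_0^\infty$. The main obstacle is the boundary reverse Hölder inequality for $N(v)$: extracting the sharp threshold $\tfrac{2(d-1)}{d-2}$ from (\ref{BC}) alone, with no interior or boundary regularity beyond the Cacciopoli inequality, and moving cleanly between solid-region $L^q$ bounds and boundary maximal-function $L^q$ bounds uniformly over all scales. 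By comparison the unboundedness of $\Omega$ is a minor matter, absorbed into the homogeneous-space form of the real-variable lemma and the density reductions.
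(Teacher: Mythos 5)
Your overall strategy—split $u$ with a localized data function, bound $F_\Delta$ directly from the $L^{p_0}$ solvability, and get the bound on $R_\Delta$ from a boundary reverse H\"older inequality which is then fed into a good-$\lambda$/real-variable lemma—is the same as the paper's, and the claim that $q=\tfrac{2(d-1)}{d-2}$ arises from a Sobolev-type threshold is the right numerology. But there are two places where your write-up glosses over real difficulties, one of which is a genuine gap.

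First, you cut off the data with the indicator $f\,\mathbf 1_{N_0\Delta}$, which is not smooth, and then appeal to a ``routine density argument'' to extend the solvability hypothesis from $C_0^\infty$ data to $L^{p_0}$ data. This is not routine: the hypothesis only gives uniform $L^{p_0}$ control of $N(u)$ for $C_0^\infty$ boundary data, with no uniqueness, interior compactness, or continuity-in-data statement from which one could extract a limit solution for $L^{p_0}$ data. The paper sidesteps this entirely by multiplying $f$ by a smooth cutoff $\varphi$ supported on a slightly larger surface ball, so that $\varphi f\in C_0^\infty$ and the hypothesis applies verbatim. The residual $w=u-v$ still vanishes on a surface ball containing $2Q$, which is all that is used. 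You should adopt this cutoff instead of assuming the density extension.

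Second, the heart of the matter is the reverse H\"older inequality $\bigl(\fint_\Delta |N(w)|^q\bigr)^{1/q}\lesssim \fint_{N_0\Delta} N(w)$ with $q=\tfrac{2(d-1)}{d-2}$, and your proposed route---ball-by-ball solid $L^{2^*}$-to-$L^2$ estimates, iterated and summed over a Whitney decomposition---does not actually produce the exponent. The Sobolev-Cacciopoli step gives gain up to $2^*=\tfrac{2d}{d-2}$ per solid ball, and you assert that Whitney summation then lowers this to $\tfrac{2(d-1)}{d-2}$ ``as the Sobolev trace exponent,'' but the trace inequality controls $u|_{\partial\Omega}$, not $N(u)$, and no mechanism is given for how the Whitney bookkeeping converts a solid $L^{2^*}$ gain into a boundary $L^{q}$ gain with this precise $q$. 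The paper's Lemma~\ref{RH-lemma} does this by a specific chain: $\int_\Delta |N^r(w)|^q\,d\sigma \lesssim \int_{D}|w|^q\delta^{-1}$ (a Fubini computation that replaces your Whitney sum), then Cauchy--Schwarz splits the right side into $\|w\|_{L^{2(q-1)}(D)}\|w/\delta\|_{L^2(D)}$, with the first factor bounded by the Sobolev inequality (forcing $2(q-1)=\tfrac{2d}{d-2}$, hence $q=\tfrac{2(d-1)}{d-2}$) and the second by the one-dimensional Hardy inequality, both reduced to $\|\nabla w\|_{L^2}$ and then to $\|w\|_{L^2}$ on a larger ball via (\ref{BC}). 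This Cauchy--Schwarz/Hardy splitting is what pins down the exponent, and nothing in your outline replaces it. Also note that (\ref{BC}) is a \emph{boundary} Cacciopoli inequality; it does not directly give interior Cacciopoli at Whitney scales with controlled constants, which your ``interior-type reverse H\"older at every scale and point'' quietly presupposes. The paper's argument uses (\ref{BC}) only on boundary balls of the macroscopic scale $r$, avoiding this issue.

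The minor point---your homogeneous-space formulation of the real-variable lemma versus the paper's bounded-cube formulation with a $|Q_0|\to\infty$ limit---is a matter of taste and either works; the $\liminf$ of the term $|Q_0|^{(d-1)(1/p-1/p_0)}\|N(u)\|_{L^{p_0}}$ vanishes as $|Q_0|\to\infty$ since $p>p_0$, which is how the paper disposes of it.
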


\begin{remark}
{\rm
Regarding the boundary Cacciopoli inequality (\ref{BC}) in a graph domain $\Omega$, 
consider the elliptic operator
\begin{equation}\label{operator}
\left(\mathcal{L}(u)\right)^\alpha
=-\frac{\partial}{\partial x_i} \left\{ a_{ij}^{\alpha\beta} (x) \frac{\partial u^\beta}{\partial x_j} \right\}
+ b_j^{\alpha\beta}(x) \frac{\partial u^\beta}{\partial x_j},
\end{equation}
where $1\le \alpha, \beta\le m$ and $1\le i, j\le d$
(the repeated indices are summed).
Assume that the coefficients $a_{ij}^{\alpha\beta} (x)$ are complex-valued bounded functions satisfying 
$\| a_{ij}^{\alpha\beta}\|_\infty\le \mu^{-1}$ and the
ellipticity condition 
\begin{equation}\label{ellipticity}
\text{\rm Re} \left( a_{ij}^{\alpha\beta} (x) \xi_j^\beta  \overline{\xi_i^\alpha}\right)
\ge \mu |\xi|^2
\end{equation}
for any $\xi =(\xi_i^\alpha)\in \mathbb{C}^{m\times d}$, where $\mu>0$.
Also assume that there exists some $\nu>0$ such that
\begin{equation}\label{B-cond}
|b_j^{\alpha\beta} (x)|\le \nu \delta (x)
\end{equation}
for any $x\in \Omega$, where $\delta (x)=\text{dist}(x, \partial\Omega)$.
Then there exists a constant $\nu_0>0$, depending only on $d$, $m$, $\mu$ and $M$, such that
if $\nu\le \nu_0$, the Cacciopoli inequality (\ref{BC}) holds for any $0<r<\infty$.
This may be proved by using Hardy's inequality.
In the case of a bounded Lipschitz domain, one only needs to assume 
(\ref{B-cond}) with $\nu\le \nu_0$ for $x$ sufficiently close to $\partial\Omega$ ($\delta(x)\le c_0 r_0$).
}
\end{remark}

\begin{remark}
{\rm
Let $d\ge 3$.
If the Dirichlet problem (\ref{DP}) is solvable for $p=p_0=\frac{2(d-1)}{d-2}$,
our argument gives the solvability for $p_0<p<p_0 +\e$.
}
\end{remark}

The $L^p$ boundary value problems for second-order elliptic equations and systems in Lipschitz domains
have been studied extensively. We refer the reader to
 \cite{Kenig-book, Shen-2006-ne, AA-2011, AAAHK-2011, HMM-2011, HKSJ-2015, HKMP-2015, DPR-2017} for references.
In particular,  the $L^2$ Dirichlet problem is solvable 
for elliptic systems with real constant coefficients satisfying the Legendre-Hadamard condition and the symmetry condition
\cite{FKV-1988, Dahlberg-Kenig-Verchota-1988, Fabes-1988, Gao-1991}.
It is also known that under the same assumption, 
the $L^p$ Dirichlet problem is solvable for $2-\e<p\le \infty$ if $d=3$ \cite{DK-1990}, 
and for $2-\e < p< \frac{2(d-1)}{d-3} +\e$ if $d\ge 4$ \cite{Shen-2006-ne}. 
More recent work in this area focuses on operators with complex coefficients or real coefficients without 
the symmetry condition \cite{AA-2011, AAAHK-2011, HMM-2011, HKSJ-2015, HKMP-2015}.

As in \cite{Shen-2006-ne}, the proof of Theorems \ref{main-theorem} and \ref{main-theorem-1} is based on a real-variable 
method, which may be regarded as a dual version of the celebrated Calder\'on-Zygmund Lemma.
The method originated in \cite{CP-1998} and was further developed in \cite{Shen-2005, Shen-2006-ne, Shen-2007}.
It reduces the $L^p$ estimate (\ref{estimate-1.0}) to the reverse H\"older inequality,
\begin{equation}\label{RH-00}
\left(\fint_{B(x_0, r)\cap\partial\Omega} |N(u)|^{q}\, d\sigma \right)^{1/q}
\le C \left(\fint_{B(x_0, 2r)\cap\partial\Omega} |N(u)|^{p_0} \, d\sigma \right)^{1/p_0}
\end{equation}
for $q=\frac{2(d-1)}{d-2}$ (for any $2<q<\infty$ if $d=2$), where $x_0\in \partial\Omega$,
$u$ is a weak solution to $\mathcal{L}(u)=0$ in $\Omega$ with $u=0$ in $B(x_0, 3r)\cap\partial\Omega$.
To prove (\ref{RH-00}), we replace $N(u)$ by $N^r(u)$, a localized nontangential maximal function at height $r$
(see Section 2 for definition), and use the observation 
\begin{equation}\label{RH-01}
\int_{B(x_0, r)\cap\partial\Omega}
|N^r (u)|^q\, d\sigma \le C \int_{B(0, 2r)\cap\Omega} | u(y)|^q \delta (y)^{-1}\, dy.
\end{equation}
The right-hand side of (\ref{RH-01}) is then handled by using Sobolev inequality and Hardy's inequality,
\begin{equation}\label{RH-02}
\int_{B(x_0, 2r)\cap \Omega} \frac{|u(y)|^2}{\delta (y)^2} \, dy
\le C \int_{B(x_0, 2r)\cap\Omega} |\nabla u|^2\, dy.
\end{equation}
The exponent $q=\frac{2(d-1)}{d-2}$ aries in the use of Sobolev inequality 
\begin{equation}\label{RH-03}
\|u\|_{L^{2(q-1)} (B(x_0, 2r)\cap\Omega)} \le C \|\nabla u\|_{L^2(B(x_0, 2r)\cap\Omega)}.
\end{equation}
It may be worthy to point out that $q$ is also the exponent in the boundary Sobolev
 inequality $\|u\|_{L^q(\partial\Omega)}\le C \| u\|_{H^{1/2}(\partial\Omega)}$.


\section{\bf Reverse H\"older inequalities}

Throughout this section we assume that $\Omega$ is the region above a Lipschitz graph in $\R^d$, given by (\ref{g-domain})
with $\|\nabla \psi\|_\infty\le M$.
A nontangential approach region at $z\in \partial\Omega$ is given by
\begin{equation}\label{Gamma}
\Gamma_a (z) =\big\{ x\in \Omega: \  |x-z|< a\,  \delta(x) \big\},
\end{equation}
where  $\delta(x)=\text{dist}(x, \partial\Omega)$ and $a>1+2M$.
We also need a truncated version 
\begin{equation}\label{Gamma-h}
\Gamma^h_a (z) =\big\{ x\in \Omega:  \ |x-z|< a\,  \delta(x) \text{ and } \delta (x)< h \big\},
\end{equation}
where $h>0$.
For $u\in L^2_{loc}(\Omega)$, the modified nontangential maximal function of $u$
is defined by
\begin{equation}\label{N}
N_a (u)(z)=\sup \left\{ 
\left( \fint_{B(x, (1/4)\delta (x))} |u|^2\right)^{1/2}: \ 
x\in \Gamma_a (z) \right\}
\end{equation}
for each $z\in \partial\Omega$. Similarly, we introduce 
\begin{equation}\label{N-h}
N^h_a (u)(z)=\sup \left\{ 
\left( \fint_{B(x, (1/4)\delta (x))} |u|^2\right)^{1/2}: \ 
x\in \Gamma^h_a (z) \right\}.
\end{equation}
The definitions of 
$N_a(u)$ and $N_a^h (u)$ are  same
if $\Omega$ is a bounded Lipschitz domain.
We will drop the subscript $a$ if there is no confusion.

\begin{lemma}\label{lemma-2.1}
Let $2\le q <\infty$. Then 
\begin{equation}\label{2.1-0}
N_a^h (u) (z)\le 
C\left( \int_{\Gamma_{2a}^{2h} (z)} |u (y)|^q \delta (y)^{-d} \, dy\right)^{1/q}
\end{equation}
for any $z\in \partial\Omega$,
where $C$ depends only on $d$ and $q$.
\end{lemma}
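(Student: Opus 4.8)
The plan is to reduce everything to a single fixed point $x\in\Gamma_a^h(z)$: it suffices to bound $\big(\fint_{B(x,\frac14\delta(x))}|u|^2\big)^{1/2}$ by the right-hand side of \eqref{2.1-0} with a constant depending only on $d$ and $q$, and then take the supremum over $x\in\Gamma_a^h(z)$. The one geometric fact that makes this work is that the ball $B=B(x,\tfrac14\delta(x))$ sits well inside the doubled truncated cone $\Gamma_{2a}^{2h}(z)$, and that $\delta$ is comparable to $\delta(x)$ on $B$. Indeed, for $y\in B$ the triangle inequality for the distance function gives $\tfrac34\delta(x)\le\delta(y)\le\tfrac54\delta(x)$, so in particular $\delta(y)<\tfrac54 h<2h$. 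Moreover $|y-z|\le|y-x|+|x-z|<\tfrac14\delta(x)+a\,\delta(x)=(a+\tfrac14)\delta(x)\le\tfrac43(a+\tfrac14)\delta(y)\le 2a\,\delta(y)$, where the last inequality uses $a>1$ (here we used $\delta(x)\le\tfrac43\delta(y)$). Hence $y\in\Gamma_{2a}^{2h}(z)$, i.e.\ $B\subset\Gamma_{2a}^{2h}(z)$.

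With the inclusion in hand, the estimate is a short computation. Since $q\ge 2$, Jensen's inequality on $B$ yields $\big(\fint_B|u|^2\big)^{1/2}\le\big(\fint_B|u|^q\big)^{1/q}=\big(|B|^{-1}\int_B|u|^q\big)^{1/q}$. Using $|B|=c_d(\tfrac14\delta(x))^d$ and, on $B$, the lower bound $\delta(y)^{-d}\ge(\tfrac54\delta(x))^{-d}$, we get $|B|^{-1}\int_B|u|^q\le C_{d}\,\delta(x)^{-d}\int_B|u|^q\le C_{d,q}\int_B|u(y)|^q\delta(y)^{-d}\,dy\le C_{d,q}\int_{\Gamma_{2a}^{2h}(z)}|u(y)|^q\delta(y)^{-d}\,dy$, the last step being the inclusion $B\subset\Gamma_{2a}^{2h}(z)$. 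Combining these and taking the supremum over $x\in\Gamma_a^h(z)$ proves \eqref{2.1-0}.

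I expect no genuine obstacle here; the only point requiring care is the bookkeeping of the cone aperture and the truncation height, which is precisely why the right-hand side is written with the doubled parameters $2a$ and $2h$: these leave exactly enough room to absorb the factors $\tfrac54$ and $\tfrac43$ produced by the comparability $\delta(y)\asymp\delta(x)$ on $B(x,\tfrac14\delta(x))$. One could of course get sharper constants by replacing the radius $\tfrac14\delta(x)$ in the definition of $N$ by a smaller multiple of $\delta(x)$, but the present choice already makes the argument entirely elementary.
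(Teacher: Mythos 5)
Your proof is correct and follows precisely the same route as the paper: show $B(x,\tfrac14\delta(x))\subset\Gamma_{2a}^{2h}(z)$ via the comparability $\tfrac34\delta(x)<\delta(y)<\tfrac54\delta(x)$ for $y$ in the ball, then apply H\"older/Jensen and absorb $|B|^{-1}\asymp\delta(x)^{-d}\asymp\delta(y)^{-d}$ into the weight. The only difference is that you spell out the last step ($|B|^{-1}\le C\,\delta(y)^{-d}$ on $B$) a bit more explicitly than the paper does.
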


\begin{proof}
Fix $x\in \Gamma_a^h (z)$.
Let $y\in B(x, (1/4)\delta (x))$. Note that 
$$
\delta(y)\le \delta (x) +|x-y|< (5/4)\delta (x).
$$
Since $\delta(x)\le \delta(y) +|x-y|< \delta (y) + (1/4)\delta (x)$,
we obtain $(3/4)\delta (x)< \delta (y)$.
It follows that
$$
\aligned
|y-z| & \le |x-z| +|x-y|
< (a +(1/4)) \delta (x)\\
&\le (4/3) (a+(1/4)) \delta (y)
\le 2a \delta (y),
\endaligned
$$
where we have used the fact $a>1$.
Also observe that $\delta (y)<(5/4)\delta (x)< (5/4) h$.
Thus we have proved that $B(x, (1/4)\delta(x))\subset \Gamma_{2a}^{2h} (z)$.
This, together with H\"older's inequality,  gives
$$
\aligned
\left(\fint_{B(x, (1/4)\delta (x))} |u(y)|^2\, dy\right)^{1/2}
&\le 
\left(\fint_{B(x, (1/4)\delta (x))} |u(y)|^q\, dy\right)^{1/q}\\
&\le C \left(\int_{\Gamma_{2a}^{2h} (z)} | u(y)|^q \delta (y)^{-d}\, dy\right)^{1/q},
\endaligned
$$
where $C$ depends only on $d$ and $q$.
The inequality (\ref{2.1-0}) now follows by definition.
\end{proof}

Assume that $\psi (0)=0$.
For $r>0$, define
\begin{equation}
\aligned
D_r & =\big\{ (x^\prime, x_d)\in \R^d: \ |x^\prime|<r \text{ and } \psi(x^\prime)< x_d< 2(M+1)r \big\},\\
\Delta_r & =\big\{ (x^\prime, \psi(x^\prime))\in \R^d: \ |x^\prime|<r \big\}.
\endaligned
\end{equation}

\begin{lemma}\label{Hardy}
Suppose that $u\in H^1(D_r)$ and $u=0$ on $\Delta_r$. Then
\begin{equation}\label{H-1}
\int_{D_r} \frac{|u(x)|^2}{\widetilde{\delta}(x)^2}\, dx
\le 4
\int_{D_r} |\nabla u|^2\, dx,
\end{equation}
where $\widetilde{\delta}(x)=|x_d-\psi (x^\prime)|$.
\end{lemma}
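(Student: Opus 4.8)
The plan is to reduce (\ref{H-1}) to the classical one‑dimensional Hardy inequality by slicing $D_r$ along vertical lines. Fix $x^\prime\in\R^{d-1}$ with $|x^\prime|<r$ and set $L(x^\prime)=2(M+1)r-\psi(x^\prime)$. Since $\psi(0)=0$ and $\|\nabla\psi\|_\infty\le M$, we have $|\psi(x^\prime)|\le M|x^\prime|<Mr$, so $L(x^\prime)>0$ and the vertical slice of $D_r$ over $x^\prime$ is exactly $\{x_d:\psi(x^\prime)<x_d<2(M+1)r\}$. On this slice put $g_{x^\prime}(t)=u(x^\prime,\psi(x^\prime)+t)$ for $0<t<L(x^\prime)$, so that $\widetilde{\delta}(x^\prime,\psi(x^\prime)+t)=t$.

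First I would record the scalar Hardy inequality: if $g\in H^1((0,L);\C)$ and $g(0)=0$, then $\int_0^L t^{-2}|g(t)|^2\,dt\le 4\int_0^L|g^\prime(t)|^2\,dt$. This follows from the identity $\int_0^L t^{-2}|g|^2\,dt=-L^{-1}|g(L)|^2+2\int_0^L t^{-1}\,\text{Re}\,(g\,\overline{g^\prime})\,dt$, obtained by integration by parts (the boundary term at $0$ vanishes because $g(0)=0$ forces $t^{-1}|g(t)|^2\to0$), followed by the Cauchy–Schwarz inequality and division; the boundary term at $L$ has the favorable sign and is discarded.

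Next I would justify the slicing for $u\in H^1(D_r)$. By Fubini's theorem and the absolute‑continuity‑on‑lines characterization of Sobolev functions, for a.e.\ $x^\prime$ with $|x^\prime|<r$ the slice $g_{x^\prime}$ lies in $H^1((0,L(x^\prime)))$ with $g_{x^\prime}^\prime(t)=\partial_{x_d}u(x^\prime,\psi(x^\prime)+t)$, and
$$
\int_{D_r}\frac{|u|^2}{\widetilde{\delta}^2}\,dx=\int_{|x^\prime|<r}\int_0^{L(x^\prime)}\frac{|g_{x^\prime}(t)|^2}{t^2}\,dt\,dx^\prime,\qquad \int_{D_r}|\partial_{x_d}u|^2\,dx=\int_{|x^\prime|<r}\int_0^{L(x^\prime)}|g_{x^\prime}^\prime(t)|^2\,dt\,dx^\prime.
$$
The hypothesis $u=0$ on $\Delta_r$ in the trace sense translates into $g_{x^\prime}(0^+)=0$ for a.e.\ such $x^\prime$: writing $g_{x^\prime}(t)=c(x^\prime)+\int_0^t g_{x^\prime}^\prime(s)\,ds$, the function $c(x^\prime)=\lim_{t\to0^+}g_{x^\prime}(t)$ is precisely (a version of) the trace of $u$ on $\Delta_r$, hence $c\equiv0$ a.e. If one prefers to avoid trace technicalities, it suffices to prove (\ref{H-1}) first for $u\in C^\infty(\overline{D_r})$ vanishing on $\Delta_r$ and then pass to the limit using density.

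Finally, applying the scalar Hardy inequality to $g_{x^\prime}$ for a.e.\ $x^\prime$ and integrating over $|x^\prime|<r$ yields
$$
\int_{D_r}\frac{|u(x)|^2}{\widetilde{\delta}(x)^2}\,dx=\int_{|x^\prime|<r}\int_0^{L(x^\prime)}\frac{|g_{x^\prime}(t)|^2}{t^2}\,dt\,dx^\prime\le 4\int_{|x^\prime|<r}\int_0^{L(x^\prime)}|g_{x^\prime}^\prime(t)|^2\,dt\,dx^\prime=4\int_{D_r}|\partial_{x_d}u|^2\,dx\le 4\int_{D_r}|\nabla u|^2\,dx,
$$
which is (\ref{H-1}). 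The only mildly delicate step is identifying $\lim_{t\to0^+}g_{x^\prime}(t)$ with the trace of $u$ on $\Delta_r$ for a.e.\ $x^\prime$ (equivalently, the density reduction to smooth $u$); the rest is a routine combination of Fubini's theorem with the one‑dimensional Hardy inequality, and the constant $4$ is exactly the sharp constant from the scalar case.
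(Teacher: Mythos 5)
Your proof is correct and follows essentially the same route as the paper: slice $D_r$ by vertical lines via Fubini, apply the one-dimensional Hardy inequality (with sharp constant $4$) on each slice using the boundary condition $u(x^\prime,\psi(x^\prime))=0$, and bound $|\partial_{x_d}u|$ by $|\nabla u|$. The only difference is that you also supply the routine supporting details (the integration-by-parts derivation of the scalar Hardy inequality and the a.e.\ slice-wise trace argument) that the paper leaves implicit by citing a reference.
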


\begin{proof}
Using
$u(x^\prime, \psi(x^\prime))=0$ and Fubini's Theorem, we obtain 
$$
\aligned
\int_{D_r} \frac{|u(x)|^2}{\widetilde{\delta} (x)^2}\, dx
&= \int_{|x^\prime|<r} \int_{\psi(x^\prime)}^{2 (1+M) r}
\frac{|u(x^\prime, x_d)|^2}{|x_d -\psi(x^\prime)|^2} \, d x_d d x^\prime\\
& \le  4\int_{|x^\prime|<r} \int_{\psi(x^\prime)}^{2(1+M) r} 
\left|\frac{\partial u}{\partial x_d} \right|^2 \, dx_d dx^\prime \\
&\le 4 \int_{D_r} |\nabla u|^2\, dx,
\endaligned
$$
where we have used the Hardy inequality (see e.g. \cite[p.272]{Stein-book})for the first inequality. 
\end{proof}
 
 The following lemma is one of the main steps in our argument.
 
\begin{lemma}\label{RH-lemma}
Let $u\in H^1(B(0, 6kr)\cap\Omega; \mathbb{C}^m)$ be a weak solution to 
$\mathcal{L}(u)=0$ in $B(0, 6kr)\cap\Omega$ with $u=0$ on $B(0, 6kr)\cap\partial\Omega$
for some $0<r<\infty$, where $k=10a(M+2)$.
Assume that 
\begin{equation}\label{C-1}
\int_{B(0, kr)\cap\Omega} |\nabla u|^2\, dx
\le \frac{C_0}{r^2} \int_{B(0, 2kr)\cap\Omega} |u|^2\, dx.
\end{equation}
Then
\begin{equation}\label{RH}
\left(\fint_{\Delta_{r}} |N_a^r (u)|^q\, d\sigma \right)^{1/q}
\le C 
\left(\fint_{\Delta_{2kr}} |N_a^{4kr} (u)|^2\, d\sigma \right)^{1/2},
\end{equation}
where $q=\frac{2(d-1)}{d-2}$ for $d\ge 3$ and $2<q<\infty$ for $d=2$.
The constant  $C$ depends only on $d$, $m$, $M$, $C_0$, and $q$ (if $d=2$).
\end{lemma}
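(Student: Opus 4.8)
The plan is to prove (\ref{RH}) via the chain of estimates sketched in the introduction, i.e. to combine (\ref{RH-01})--(\ref{RH-03}), the hypothesis (\ref{C-1}), and a ``reverse'' bound comparing a solid $L^2$-integral of $u$ with the $L^2$-norm of its modified nontangential maximal function. First I would bound the left-hand side pointwise and pass to a solid integral: applying Lemma \ref{lemma-2.1} with $h=r$, raising to the $q$-th power, and integrating over $z\in\Delta_r$, Fubini's theorem applies because for fixed $y$ the surface measure of $\{z\in\Delta_r:\ y\in\Gamma_{2a}^{2r}(z)\}$ is at most $C\delta(y)^{d-1}$, and this set is non-empty only when $\delta(y)<2r$ and $y$ lies within $4ar$ of $\Delta_r$, hence only for $y$ in a region $E_r$ which, thanks to the choice $k=10a(M+2)$, is contained in $B(0,\tfrac12 kr)\cap\Omega$. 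This yields
$$
\int_{\Delta_r}|N_a^r(u)|^q\, d\sigma \le C\int_{E_r}|u(y)|^q\,\delta(y)^{-1}\, dy .
$$

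For the core estimate, note that $q=\tfrac{2(d-1)}{d-2}$ gives $q-1=\tfrac{d}{d-2}$ and, crucially, $2(q-1)=\tfrac{2d}{d-2}$ is exactly the Sobolev exponent for $W^{1,2}$ in $\R^d$. Writing $|u|^q\delta^{-1}=|u|^{q-1}\cdot\big(|u|\,\delta^{-1}\big)$ and using the Cauchy--Schwarz inequality,
$$
\int_{E_r}|u|^q\delta^{-1}\,dy \le \Big(\int_{E_r}|u|^{2(q-1)}\Big)^{1/2}\Big(\int_{E_r}\frac{|u|^2}{\delta^2}\Big)^{1/2}.
$$
The first factor I would estimate by the Sobolev inequality (\ref{RH-03}) and the second by Hardy's inequality (\ref{RH-02}) (Lemma \ref{Hardy}), both applied on a ball of radius $\asymp r$ which contains $E_r$ and is itself contained in $B(0,kr)\cap\Omega$; since $u$ vanishes on $B(0,6kr)\cap\partial\Omega$ these apply, and combined they give $\int_{E_r}|u|^q\delta^{-1}\le C\|\nabla u\|_{L^2(B(0,kr)\cap\Omega)}^q$. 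The hypothesis (\ref{C-1}) then turns this into $Cr^{-q}\big(\int_{B(0,2kr)\cap\Omega}|u|^2\big)^{q/2}$. (For $d=2$ one takes $q<\infty$ arbitrary; (\ref{RH-03}) then carries an extra factor $r$, which is precisely what the final normalization absorbs.)

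It remains to prove the reverse bound $\int_{B(0,2kr)\cap\Omega}|u|^2\,dx\le Cr\int_{\Delta_{2kr}}|N_a^{4kr}(u)|^2\,d\sigma$. For this I would take a Whitney decomposition $\{Q\}$ of $B(0,2kr)\cap\Omega$: for each cube $Q$ with centre $x_Q$ and side $\ell(Q)\asymp\delta(x_Q)<2kr$, one has $\fint_Q|u|^2\le C\fint_{B(x_Q,\delta(x_Q)/4)}|u|^2\le C\,|N_a^{4kr}(u)(z)|^2$ for every $z$ in the vertical shadow $\widetilde\Delta_Q\subset\Delta_{2kr}$ of $Q$, since $x_Q\in\Gamma_a^{4kr}(z)$ there (here $a>1+2M$ is used); averaging over $\widetilde\Delta_Q$, whose surface measure is $\asymp\ell(Q)^{d-1}$, gives $\int_Q|u|^2\le C\ell(Q)\int_{\widetilde\Delta_Q}|N_a^{4kr}(u)|^2\,d\sigma$, and summing over $Q$ with the bounded-overlap bound $\sum_Q\ell(Q)\,\mathbf{1}_{\widetilde\Delta_Q}\le Ckr$ (the cubes of a fixed size have bounded-overlap shadows and the sizes form a geometric series) yields the claim. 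Combining the three steps,
$$
\int_{\Delta_r}|N_a^r(u)|^q\,d\sigma \le Cr^{-q}\Big(\int_{B(0,2kr)\cap\Omega}|u|^2\Big)^{q/2} \le Cr^{-q/2}\Big(\int_{\Delta_{2kr}}|N_a^{4kr}(u)|^2\,d\sigma\Big)^{q/2}.
$$
Dividing by $|\Delta_r|\asymp r^{d-1}$ and taking the $q$-th root, the powers of $r$ cancel because $\tfrac{d-1}{2}-\tfrac12=\tfrac{d-2}{2}=\tfrac{d-1}{q}$; this identity is exactly where the value $q=\tfrac{2(d-1)}{d-2}$ is forced, and it gives (\ref{RH}).

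The main obstacle is not conceptual but bookkeeping. One has to check that every auxiliary region --- the dilated truncated cones coming from Lemma \ref{lemma-2.1}, the ball on which (\ref{RH-02})--(\ref{RH-03}) are invoked, and the Whitney shadows --- is contained in $B(0,6kr)\cap\Omega$, so that $u$ is defined there and vanishes on the relevant portion of $\partial\Omega$, while simultaneously the surface balls remain inside $\Delta_{2kr}$. This is precisely what dictates the choice $k=10a(M+2)$ and the numerical factors $6k$, $2k$, $4k$ in the statement, and it is the step I expect to demand the most attention. A secondary point to keep in mind is that $N_a$ is defined through $L^2$-averages over Whitney-scale balls, so both Fubini/covering arguments must be phrased in terms of such averages rather than pointwise values of the (merely $W^{1,2}$) solution $u$.
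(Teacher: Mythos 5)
Your proposal follows essentially the same route as the paper's proof: Lemma \ref{lemma-2.1} plus Fubini to pass to the solid integral $\int |u|^q\delta^{-1}$, Cauchy--Schwarz to factor it, Hardy's inequality (Lemma \ref{Hardy}) and the Sobolev inequality to control each factor by $\|\nabla u\|_{L^2}$, then (\ref{C-1}), and finally a reverse estimate converting $\int_{B\cap\Omega}|u|^2$ into $\int_\Delta |N(u)|^2\,d\sigma$. The only cosmetic difference is in that last step, where you use a Whitney decomposition and shadow-overlap count while the paper does the equivalent Fubini swap directly via the observation that $|x-y|<(1/5)\delta(y)$ implies $|x-y|<(1/4)\delta(x)$; your exponent bookkeeping at the end is correct.
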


\begin{proof}
We give the proof for the case $d\ge3$.
With minor modification,
the same argument works for $d=2$.
 It follows from (\ref{2.1-0}) and Fubini's Theorem that
$$
\aligned
\int_{\Delta_r} |N_a^r (u)|^q\, d\sigma
& \le C\int_{\Delta_r} \int_{\Gamma_{2a}^{2r} (z)} |u(y)|^q \delta (y)^{-d}\, dy d\sigma (z)\\
&\le  C \int_E  |u(y)|^q \delta(y)^{-1}\, dy,
\endaligned
$$
where 
$$
E= \bigcup_{z\in \Delta_r} \Gamma_{2a}^{2r} (z).
$$
Note that if $y\in E$, then $y\in \Gamma_{2a}^{2r} (z)$ for some $z\in \Delta_r$. 
Hence,
$$
\aligned
|y| & \le |y-z| +|z|< 2a \delta (y) +(1+M)r\\
& \le (4a +1+M) r\le 5a r,
\endaligned
$$
where we have used the fact $a\ge 1+M$.
This shows that $|y^\prime|\le 5a r$ and
$|y_d|<5a r$.
As a result, we obtain $E\subset D_{5ar}$.
Thus,
\begin{equation}\label{2.10}
\aligned
\int_{\Delta_r} |N_a^r (u)|^q\, d\sigma
&\le C \int_{D_{5ar}} |u(y)|^q \delta (y)^{-1}\, dy\\
& \le C \left(\int_{D_{5ar}} |u|^{2(q-1)} \, dy\right)^{1/2}
\left(\int_{D_{5ar}} \frac{|u(y)|^2}{\delta(y)^2}\, dy \right)^{1/2},
\endaligned
\end{equation}
where we have used the Cauchy inequality for the last step.

To bound the right-hand side of (\ref{2.10}), we first note that
$$
\frac{1}{\sqrt{2} (M+1)} |x_d-\psi (x^\prime)|
\le \delta (x)\le |x_d-\psi (x^\prime)|.
$$
In view of Lemma \ref{Hardy} we obtain 
\begin{equation}\label{2.11}
\int_{D_{5ar}} \frac{|u(y)|^2}{\delta(y)^2}\, dy 
\le C \int_{D_{5ar}} |\nabla u(y)|^2\, dy,
\end{equation}
where $C$ depends only on $M$.
Recall that $q=\frac{2(d-1)}{d-2}$. Thus $2(q-1)=\frac{2d}{d-2}$.
Since $u=0$ on $\Delta_{5ar}$, we may apply the Sobolev inequality to obtain 
\begin{equation}\label{2.12}
\left(\int_{D_{5ar}} |u|^{2(q-1)}\, dy \right)^{1/(2(q-1))}
\le C \left(\int_{D_{5ar}} |\nabla u|^2\, dy\right)^{1/2}.
\end{equation}
This, together with (\ref{2.10}) and (\ref{2.11}), leads to
\begin{equation}\label{2.13}
\aligned
\int_{\Delta_r} |N_a^r (u)|^q \, d\sigma
& \le C \left(\int_{D_{5ar}} |\nabla u|^2\, dy \right)^{q/2}\\
&\le C \left(\int_{B(0, 10a(M+2) r)\cap\Omega} |\nabla u|^2\, dy \right)^{q/2},
\endaligned
\end{equation}
where we have used the observation $D_{5ar}\subset B(0, 10a(M+2)r)$ for the last step.
Hence,
\begin{equation}\label{2.14}
\aligned
\left(\fint_{\Delta_r} |N_a^r (u)|^q \, d\sigma\right)^{1/q}
& \le C r^{\frac{d}{2}-\frac{d-1}{q}}
 \left(\fint_{B(0, 10a(M+2) r)\cap\Omega} |\nabla u|^2\, dy \right)^{q/2}\\
 &\le C \left(\fint_{B(0, 20a(M+2) r)\cap\Omega} | u|^2\, dy \right)^{/2},
\endaligned
\end{equation}
where we have used  the assumption (\ref{C-1}) for the last step.

Finally, we note that if $|x-y|<(1/5)\delta (y)$, then $|x-y|< (1/4)\delta (x)$.
Thus, by Fubini's Theorem,
$$
\aligned
\fint_{B(0,R)\cap \Omega} |u|^2\, dx
 &\le C \fint_{B(0, R)\cap \Omega}  
\left(\fint_{B(x, (1/4)\delta(x))} |u|^2\, dy\right) dx\\
&\le C \fint_{\Delta_R} |N_a^{2R} (u)|^2\, d\sigma
\endaligned
$$
for any $R>0$. This, together with (\ref{2.14}), yields the reverse H\"older inequality (\ref{RH}).
\end{proof}

We are now ready to prove the main result of this section.

\begin{thm}\label{RH-theorem}
Let $u\in H^1(B(0, 9kR)\cap\Omega; \mathbb{C}^m)$ be a weak solution to 
$\mathcal{L}(u)=0$ in $B(0, 9kR)\cap\Omega$ with $u=0$ on $B(0, 9kR)\cap\partial\Omega$
for some $0<R<\infty$, where $k=10a(M+2)$.
Assume that 
\begin{equation}\label{C-2}
\int_{B(z, r)\cap\Omega} |\nabla u|^2\, dx
\le \frac{C_0}{r^2} \int_{B(z, 2r)\cap\Omega} |u|^2\, dx
\end{equation}
for any $0<r<3kR$ and any $z\in B(0,3kR)\cap\partial\Omega$.
Then for any $0<r<R$,
\begin{equation}\label{RH-2}
\left(\fint_{\Delta_{r}} |N_a^{4kR} (u)|^q\, d\sigma \right)^{1/q}
\le C 
\fint_{\Delta_{2r}} N_a^{4kR} (u)\, d\sigma ,
\end{equation}
where $q=\frac{2(d-1)}{d-2}$ for $d\ge 3$ and $2<q<\infty$ for $d=2$.
The constant  $C$ depends only on $d$, $m$, $M$, $C_0$, and $q$ (if $d=2$).
\end{thm}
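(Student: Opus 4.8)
The plan is to bootstrap Lemma~\ref{RH-lemma} into a scale-invariant reverse H\"older inequality valid at \emph{every} boundary point and \emph{every} scale, and then to invoke the self-improving property of reverse H\"older inequalities to both lower the exponent on the right-hand side from $2$ to $1$ and reduce the dilation factor from $2k$ to $2$. Set $G:=N_a^{4kR}(u)$ and, for $w=(w',\psi(w'))$ on the graph, write $\Delta_\rho(w)$ for the corresponding surface ball. First I would rescale and translate Lemma~\ref{RH-lemma}: for any $w$ with $|w'|$ at most a fixed multiple of $R$ and any $0<\rho<c_1R$, with $c_1=c_1(d,M)$ small enough that $B(w,9k\rho)\subset B(0,9kR)$, the hypotheses of Lemma~\ref{RH-lemma} hold with origin $w$ and radius $\rho$; indeed $u$ solves $\mathcal L(u)=0$ in $B(w,6k\rho)\cap\Omega$ and vanishes on $B(w,6k\rho)\cap\partial\Omega$, while its Cacciopoli hypothesis~(\ref{C-1}) at the ball $B(w,k\rho)$ is the special case $z=w$, radius $k\rho<3kR$, of~(\ref{C-2}). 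This yields
\[
\Big(\fint_{\Delta_\rho(w)}|N_a^{\rho}(u)|^q\,d\sigma\Big)^{1/q}
\le C\Big(\fint_{\Delta_{2k\rho}(w)}|N_a^{4k\rho}(u)|^2\,d\sigma\Big)^{1/2}
\le C\Big(\fint_{\Delta_{2k\rho}(w)}G^2\,d\sigma\Big)^{1/2},
\]
the last step because $4k\rho\le4kR$ forces $N_a^{4k\rho}(u)\le G$ pointwise.

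Next I would upgrade the left-hand side from $N_a^{\rho}(u)$ to $G$. The difference is the contribution to $G(z)$ of points $x\in\Gamma_a(z)$ with $\rho\le\delta(x)<4kR$. For such $x$ and any $z''\in\Delta_{2\rho}(w)$ one has $|z''-z|\le C_M\rho\le C_M\delta(x)$, hence $|x-z''|\le(a+C_M)\delta(x)$, so $x\in\Gamma_{a+C_M}^{4kR}(z'')$ and $(\fint_{B(x,(1/4)\delta(x))}|u|^2)^{1/2}\le N_{a+C_M}^{4kR}(u)(z'')$ for \emph{every} $z''\in\Delta_{2\rho}(w)$. Averaging in $z''$ and taking the supremum over $x$ gives $G(z)\le N_a^{\rho}(u)(z)+C\fint_{\Delta_{2\rho}(w)}N_{a+C_M}^{4kR}(u)\,d\sigma$ on $\Delta_\rho(w)$. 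Using the classical comparability of nontangential maximal functions of different apertures over surface balls of the Lipschitz graph, and combining with the previous display, I obtain
\[
\Big(\fint_{\Delta_\rho(w)}G^q\,d\sigma\Big)^{1/q}\le C\Big(\fint_{\Delta_{C\rho}(w)}G^2\,d\sigma\Big)^{1/2}
\]
for all admissible $w$ and $0<\rho<c_1R$.

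Finally I would self-improve. Squaring, the last display says that $G^2$ satisfies a reverse H\"older inequality of exponent $q/2>1$ with the $L^1$ average of $G^2$ on the right, over a dilate of the surface ball. Since surface measure on the Lipschitz graph is doubling (Ahlfors $(d-1)$-regular), $G^2$ is then an $A_\infty$ weight for it, and for such weights one may simultaneously lower the right-hand exponent and replace the dilation factor by $2$; this gives $\big(\fint_{\Delta_\rho(w)}G^q\,d\sigma\big)^{1/q}\le C\fint_{\Delta_{2\rho}(w)}G\,d\sigma$, which for $0<r<c_1R$ is exactly~(\ref{RH-2}). The finitely many remaining scales $c_1R\le r<R$ follow from a bounded iteration of this case, at the cost of enlarging $C$. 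A self-contained substitute for the $A_\infty$ input is a stopping-time decomposition of $\{z\in\Delta_{2r}:G(z)>\lambda\}$ into maximal surface balls, on each of which one runs the rescaled Lemma~\ref{RH-lemma} through the previous two steps; this reproves the needed self-improvement lemma in the present setting.

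The self-improvement step is classical; I expect the genuinely delicate part to be the bookkeeping in the first two steps, namely keeping every auxiliary ball inside $B(0,9kR)$ as the scale $\rho$ decreases and the center $w$ wanders, together with absorbing the unavoidable enlargement of the cone aperture. The extra room in the radius ($9kR$ rather than the $6kR$ a single application of Lemma~\ref{RH-lemma} at radius $R$ would demand) is precisely what makes this bookkeeping go through.
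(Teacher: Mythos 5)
Your plan matches the paper's proof almost step for step: rescale and translate Lemma~\ref{RH-lemma} to cover all centers and scales, remove the truncation on the nontangential maximal function by controlling the contribution from $\delta(x)\ge\rho$ by an average over a surface ball (the paper phrases this as a ``simple geometric observation'' about the measure of $\{y\in\Delta_{2kr}:x\in\Gamma_a^{4kR}(y)\}$ whereas you enlarge the aperture to $a+C_M$ and invoke Fefferman--Stein cone comparability, which amounts to the same thing), and then reduce the right-hand exponent from $2$ to $1$ and the dilation from $2k$ to $2$ by a self-improvement argument; the paper carries this last step out explicitly via the convexity/log-integration argument attributed to Fefferman--Stein, which is precisely what your stopping-time alternative would reprove.

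One small caution on the self-improvement step as you phrased it: the fact that $G^2\in A_\infty$ is strictly weaker than what you need. Membership in $A_\infty$ only yields a reverse H\"older inequality for \emph{some} exponent bigger than one, not for the specific exponent $q/2$ you started with, and it says nothing directly about passing to an $L^1$ average of $G$ itself on the right. The precise tool is the self-improvement lemma for weak reverse H\"older inequalities (lowering the exponent on the right-hand side while keeping the exponent on the left fixed), which the paper establishes by the convexity computation culminating in the bound for $I(1/2)$. Your stopping-time substitute is the right way to justify it if you prefer not to cite; the bare $A_\infty$ invocation would not compile into a correct proof on its own.
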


\begin{proof}
We first show that for any $0<r<R$,
\begin{equation}\label{RH-3}
\left(\fint_{\Delta_{r}} |N_a^{4kR} (u)|^q\, d\sigma \right)^{1/q}
\le C 
\left(\fint_{\Delta_{2kr}} |N_a^{4kR} (u)|^2\, d\sigma\right)^{1/2}.
\end{equation}
Let $z\in \Delta_r$ and $x\in\Gamma_a^{4kR} (z)$.
If $\delta(y)<r$, we have
$$
\left(\fint_{B(x, (1/4)\delta(x))} |u|^2\right)^{1/2}
\le N_a^r (u)(z).
$$
Suppose $\delta(x)>r$. It follows by a simple geometric observation that
there exists a constant $c_0\in (0, 1)$, depending only on $d$, $M$ and $a$, such that
$$
|\big\{ y\in \Delta_{\Delta_{2kr}}: \ x\in \Gamma_a^{4kR} (y) \big\}|
\ge c_0 r^{d-1}.
$$
This implies that
$$
\left(\fint_{B(x, (1/4)\delta(x))} |u|^2\right)^{1/2}
\le C \fint_{\Delta_{2kr}} 
N_a^{4kR}(u)\, d\sigma.
$$
Hence, for any $z\in \Delta_r$, 
\begin{equation}\label{RH-3.01}
N_a^{4kR} (u) (z)
\le N_a^r (u) (z)+ C \fint_{\Delta_{2kr}} N_a^{4kR} (u)\, d\sigma,
\end{equation}
which, together with (\ref{RH}), gives (\ref{RH-3}).

The fact that (\ref{RH-3}) implies (\ref{RH-2}) follows from a convexity argument, found in \cite{FS-H-p}.
For $z=(z^\prime, \psi(z^\prime))\in \partial\Omega$ and $r>0$, define the surface ball $\Delta_r (z)$
on $\partial\Omega$ by
\begin{equation}\label{Q}
\Delta_r (z)
=\big\{ (x^\prime, \psi (x^\prime))\in \R^d: \
|x^\prime- z^\prime|< r \big\}.
\end{equation}
Note that $\Delta_r =\Delta_r(0)$.
By translation the inequality (\ref{RH-3}) continues to hold if $\Delta_r$ and
$\Delta_{2kr}$ are replaced by $\Delta_r (z)$ and $\Delta_{2kr}(z)$, respectively.
Let $0<s<t<1$.
We may cover $\Delta_{sr}$ by a finite number of surface balls $\{ \Delta_{c(t-s)r} (z_\ell)\}$
with the property $\Delta_{2kc(t-s)r} (z_\ell)\subset \Delta_{tr}$.
Note that
$$
\aligned
\fint_{\Delta_{sr}} |N_a^{4kR} (u)|^q\, d\sigma 
&\le Cs^{1-d} (t-s)^{d-1} \sum_\ell \fint_{\Delta_{c(t-s)r} (z_\ell)} |N_a^{4kR}(u)|^q\, d\sigma \\
&\le Cs^{1-d}(t-s)^{d-1} \sum_\ell \left(\fint_{\Delta_{2kc(t-s)r} (z_\ell)} |N_a^{4kR}(u)|^2\, d\sigma \right)^{q/2}\\
&\le C s^{1-d} (t-s)^{d-1}
\left( \sum_\ell \fint_{\Delta_{2kc(t-s)r}(z_\ell)} |N_a^{4kR} (u)|^2\, d\sigma  \right)^{q/2}\\
&\le C s^{1-d} (t-s)^{d-1}t^{\frac{q}{2} (d-1)} (t-s)^{-\frac{q}{2}(d-1)}
\left(\fint_{\Delta_{tr}} |N_a^{4kR} (u)|^2 \, d\sigma \right)^{q/2}.
\endaligned
$$
It follows that for any $0<s<t<1$,
\begin{equation}\label{RH-5}
\left(\fint_{\Delta_{sr}} |N_a^{4kR} (u)|^q\, d\sigma \right)^{1/q}
\le C s^{\frac{1-d}{q}} t^{\frac{d-1}{2}}
(t-s)^{(d-1)(\frac{1}{q}-\frac12)}
\left(\fint_{\Delta_{tr}} |N_a^{4kR} (u)|^2\, d\sigma \right)^{1/2}.
\end{equation}
Write $\frac{1}{2}=\frac{\theta}{q} +\frac{\theta}{1}$, where $\theta\in (0,1)$.
By H\"older's inequality,
\begin{equation}\label{RH-6}
\left(\fint_{\Delta_{tr}} |N_a^{4kR} (u)|^2\, d\sigma \right)^{1/2}
\le 
\left(\fint_{\Delta_{tr}} |N_a^{4kR} (u)|^q\, d\sigma \right)^{(1-\theta)/q}
\left(\fint_{\Delta_{tr}} |N_a^{4kR} (u)|\, d\sigma \right)^\theta.
\end{equation}
Let 
$$
I(t)=\left(\fint_{\Delta_{tr}} |N_a^{4kR} (u)|^q\, d\sigma \right)^{1/q}
/\fint_{\Delta_{r}} N_a^{4kR} (u)\, d\sigma.
$$
By (\ref{RH-5}) and (\ref{RH-6}) we obtain 
$$
I(s)\le C s^{\frac{1-d}{q}} t^{(d-1)(\frac12-\theta)}
(t-s)^{(d-1)(\frac{1}{q}-\frac12)}
\big[ I(t)\big]^{1-\theta}.
$$
Hence,
$$
\log I(s)  \le \log \left( C s^{\frac{1-d}{q}} t^{(d-1)(\frac12-\theta)}
(t-s)^{(d-1)(\frac{1}{q}-\frac12)}\right)
+ (1-\theta) \log I (t).
$$
Let $s=t^b$, where $b>1$ is chosen so that $b^{-1}>1-\theta$.
We integrate the inequality above in $t$ with respect to $t^{-1} dt$ over the interval
$(1/2, 1)$.
This gives
$$
\frac{1}{b} \int_{(1/2)^b}^1 \log I(t)\frac{dt}{t}
\le C + (1-\theta) \int_{1/2}^1 \log I(t)\, \frac{dt}{t}.
$$
It follows that
$$
(\frac{1}{b} -\theta) \int_{1/2}^1  \log I(t)\frac{dt}{t} \le C.
$$
Since $I(t)\ge c I(1/2)$ for $t\in (1/2, 1)$, we obtain $I(1/2)\le C$,
which gives (\ref{RH-2}).
\end{proof}

\begin{remark}\label{self}
{\rm
By translation  the inequality (\ref{RH-2}) continues to hold if $\Delta_r$ and
$\Delta_{2r}$ are replaced by surface balls $\Delta_{r}(z)$ and $\Delta_{2r} (z)$, respectively,
where $z\in \Delta_R$.
In the case $d\ge 3$, (\ref{RH-2}) in fact holds for some
$\overline{q}=\frac{2(d-1)}{d-2} +\e$, where $\e>0$ depends only on 
$d$, $m$, $M$ and $C_0$.
This follows from the well known self-improving property of the reverse H\"older
inequality. 
}
\end{remark}


\section{\bf Proof of Theorem \ref{main-theorem-1}}

Throughout this section  we assume that $\Omega$ is  a graph domain, given by (\ref{g-domain}),
with $\psi (0)=0$ and $\|\nabla \psi\|_\infty\le M$.
Consider the map $\Phi: \partial\Omega \to \R^{d-1}$, defined by $\Phi (x^\prime, \psi(x^\prime))=x^\prime$.
We say $Q\subset \partial\Omega$ is a surface cube of $\partial\Omega$ if $\Phi(Q)$ is a cube of $\R^{d-1}$ (with sides parallel to the coordinate planes).
A dilation of $Q$ is defined by $\alpha Q=\Phi^{-1} (\alpha \Phi(Q))$.
We call $z\in Q$ the center of $Q$ if $\Phi(z)$ is the center of $\Phi(Q)$.
Similarly, the side length of $Q$ is defined to be the side length of $\Phi(Q)$.

Proof of Theorems \ref{main-theorem} and \ref{main-theorem-1} is based on a real variable argument.

 \begin{thm}\label{real-theorem}
 Let $F\in L^{p_0}(2Q_0)$ for some surface cube $Q_0$ of $\partial\Omega$ and $1\le p_0<\infty$.
 Let $p_1>p_0$ and $f \in L^p (2Q_0)$ for some $p_0<p<p_1$.
 Suppose that for each surface cube $Q\subset Q_0$ with $|Q|\le \beta |Q_0|$,
 there exist two integrable functions $F_{Q}$ and $R_Q$ such that
 \begin{align}
 |F| & \le |F_Q| +|R_Q| \quad \text{ on } 2Q,\label{real-1}\\
 \left(\fint_{2Q} |R_Q|^{p_1}\, d\sigma \right)^{1/p_1}
  & \le C_1 \left\{
\left( \fint_{\alpha Q} |F|^{p_0}\, d\sigma\right)^{1/p_0}
 +\sup_{2Q_0\supset Q^\prime\supset Q}
 \left(\fint_{Q^\prime} |f|^{p_0}\, d\sigma\right)^{1/p_0}  \right\},\label{real-2}\\
\left( \fint_{2Q} |F_Q|^{p_0}\, d\sigma \right)^{1/p_0}
  &\le C_2 \sup_{2Q_0\supset Q^\prime\supset Q} \left(\fint_{Q^\prime} |f|^{p_0}\, d\sigma\right)^{1/p_0} ,\label{real-3}
 \end{align}
 where $C_1, C_2>0$ and $0<\beta<1< \alpha$. Then 
 \begin{equation}\label{real-4}
 \left(\fint_{Q_0}
 |F|^p\, d\sigma  \right)^{1/p}
 \le {C}\left(\fint_{2Q_0} |F|^{p_0}\, d\sigma\right)^{1/p_0} 
 +C \left(\fint_{2Q_0} |f|^p\, d\sigma \right)^{1/p},
 \end{equation}
 where $C>0$ depends at most on $d$, $M$, $p_0$, $p_1$, $p$,  $C_1$, $C_2$, $\alpha$ and $\beta$.
 \end{thm}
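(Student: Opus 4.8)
The plan is to run the standard "good-$\lambda$" / Calderón–Zygmund stopping-time machinery, but in its dual form: instead of decomposing a large set, we iterate a local reverse-type bound obtained by combining \eqref{real-1}--\eqref{real-3} with the Hardy–Littlewood maximal function on the space of homogeneity $\partial\Omega$ (with the dyadic cube structure coming from $\Phi$). Throughout, normalize so that the right-hand side quantities are controlled: set $\lambda_0 = \left(\fint_{2Q_0}|F|^{p_0}\,d\sigma\right)^{1/p_0} + \left(\fint_{2Q_0}|f|^p\,d\sigma\right)^{1/p}$, and it suffices to bound $\fint_{Q_0}|F|^p\,d\sigma$ by $C\lambda_0^p$.

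First I would introduce the localized maximal operators on $2Q_0$: the dyadic maximal function $\mathcal{M}_{2Q_0}$ relative to the dyadic subcubes of $2Q_0$, and set $g = \big(\mathcal{M}_{2Q_0}(|f|^{p_0})\big)^{1/p_0}$, so that the $\sup$ appearing in \eqref{real-2}--\eqref{real-3} is dominated by $g$ on $Q$. Fix a threshold $\lambda > \Lambda\,\lambda_0$ for a large constant $\Lambda$ to be chosen, and perform a Calderón–Zygmund stopping-time decomposition of $\{x\in Q_0 : \mathcal{M}_{Q_0}(|F|^{p_0})(x) > \lambda^{p_0}\}$ into maximal dyadic cubes $\{Q_j\}$. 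By maximality of the $Q_j$ and the doubling of $\sigma$, the parent has average $\le \lambda^{p_0}$, so $\fint_{\alpha Q_j}|F|^{p_0} \lesssim \lambda^{p_0}$ (here one needs $\alpha Q_j$ to still sit inside $2Q_0$, which holds once $\lambda$ is large enough that $|Q_j|\le \beta|Q_0|$ — this forces the hypothesis $\lambda_0$-normalization). On each $Q_j$ apply \eqref{real-1}: $|F|\le |F_{Q_j}| + |R_{Q_j}|$ on $2Q_j$. For the $F_{Q_j}$ piece, \eqref{real-3} and Chebyshev give $\sigma\{x\in Q_j : |F_{Q_j}| > \lambda/4\} \lesssim \lambda^{-p_0}\int_{2Q_j}|F_{Q_j}|^{p_0} \lesssim C_2^{p_0}\lambda^{-p_0}\int_{Q_j} g^{p_0}$; summing over $j$, and using that the $F_{Q_j}$-term is genuinely small compared to $\lambda$ when $g$ is small, handles the "bad part." For the $R_{Q_j}$ piece, \eqref{real-2} together with $\fint_{\alpha Q_j}|F|^{p_0}\lesssim \lambda^{p_0}$ gives $\fint_{2Q_j}|R_{Q_j}|^{p_1}\lesssim C_1^{p_1}\big(\lambda + \inf_{Q_j} g\big)^{p_1}$, wait — more precisely $\lesssim C_1^{p_1}\big(\lambda^{p_1} + (\fint_{Q_j}g^{p_0})^{p_1/p_0}\big)$, and by Chebyshev with exponent $p_1$, $\sigma\{x\in Q_j : |R_{Q_j}| > \lambda/4\} \lesssim C_1^{p_1}\lambda^{-p_1}\big(\lambda^{p_1}|Q_j| + (\int_{Q_j}g^{p_0})^{p_1/p_0}|Q_j|^{1-p_1/p_0}\big)$. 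Combining, one gets the good-$\lambda$ inequality
\[
\sigma\{x\in Q_0 : \mathcal{M}_{Q_0}(|F|^{p_0})(x) > (2\lambda)^{p_0}\}
\le C\big(\Lambda^{-1} + (\tfrac{\lambda_0}{\lambda})^{\,\epsilon}\big)\,\sigma\{x : \mathcal{M}_{Q_0}(|F|^{p_0}) > \lambda^{p_0}\}
+ C\,\sigma\{x : g > \eta\lambda\}
\]
for suitable small $\eta$ and some $\epsilon>0$ coming from the gap $p_1 - p$; the first term is absorbed by choosing $\Lambda$ large.

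Then I would multiply by $p\lambda^{p-1}$ and integrate in $\lambda$ over $(\Lambda\lambda_0,\infty)$ in the usual way. The absorbable term moves to the left (legitimate because $F\in L^{p_0}$ forces $\mathcal{M}_{Q_0}(|F|^{p_0})\in L^{p/p_0,\infty}$ a priori, so all integrals are finite after a truncation argument), and the $g$-term yields $\int g^p\,d\sigma$. Since $p/p_0 > 1$, the Hardy–Littlewood maximal theorem on the doubling space $(2Q_0,\sigma)$ gives $\int_{Q_0} g^p\,d\sigma = \int_{Q_0}\big(\mathcal{M}_{2Q_0}(|f|^{p_0})\big)^{p/p_0}\,d\sigma \lesssim \int_{2Q_0}|f|^p\,d\sigma$. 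The contribution from $\lambda \le \Lambda\lambda_0$ is trivially $\lesssim \Lambda^p\lambda_0^p|Q_0|$. Dividing by $|Q_0|$ yields \eqref{real-4}.

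The main obstacle is twofold and both parts are bookkeeping rather than conceptual: (i) ensuring the a priori finiteness needed to absorb the self-referential term — one must first prove the estimate with $F$ replaced by a truncation $\min(|F|,N)$, get a bound independent of $N$, and pass to the limit, which requires checking that all constants are $N$-independent; and (ii) the geometric compatibility of the dilations — the stopping cubes $Q_j$ must satisfy $\alpha Q_j \subset 2Q_0$ and the decomposition must respect the quasi-metric structure induced by $\Phi$ on $\partial\Omega$, so one works with the dyadic cubes of $\R^{d-1}$ pulled back by $\Phi^{-1}$ and uses that $\sigma$ is comparable to $(d-1)$-dimensional Lebesgue measure under $\Phi$ (with constants depending on $M$). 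Neither obstruction is serious, but (i) is the one most easily mishandled, so I would set it up carefully at the start by fixing the truncation level throughout the good-$\lambda$ argument.
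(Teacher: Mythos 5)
Your overall strategy --- pass to $|F|^{p_0}$ and $|f|^{p_0}$, run a dyadic Calder\'on--Zygmund stopping time on level sets of a local maximal function, split each stopping cube through \eqref{real-1}, and handle the two pieces with weak-$(p_0,p_0)$ and weak-$(p_1,p_1)$ Chebyshev before integrating a good-$\lambda$ inequality against $\lambda^{p-1}\,d\lambda$ --- is precisely the Caffarelli--Peral mechanism that the paper invokes by citing \cite{Shen-2007}, and your reduction of $p_0>1$ to $p_0=1$ via powers is exactly how the paper disposes of the general $p_0$ as well. So the blueprint is right. However, the good-$\lambda$ inequality you display is false as written, and this is the step that makes or breaks the argument. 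With the threshold factor fixed at $2$, the $R_{Q_j}$ piece after Chebyshev at exponent $p_1$ contributes on each stopping cube a quantity of order $2^{-p_1}\sigma(Q_j)$ --- a fixed constant multiple of $\sigma(Q_j)$, not $\Lambda^{-1}\sigma(Q_j)$ and not $(\lambda_0/\lambda)^\e\sigma(Q_j)$. The lower threshold $\Lambda\lambda_0$ does not shrink this coefficient; its only job is to force the stopping cubes to satisfy $|Q_j|\le\beta|Q_0|$ so that the hypotheses \eqref{real-2}--\eqref{real-3} may be invoked. The actual absorption comes from using a threshold factor $A$ to be chosen large: the $R$-piece then contributes $\approx CC_1^{p_1}A^{-p_1}\sigma(Q_j)$, and upon integrating against $\lambda^{p-1}\,d\lambda$ the change of variables picks up $A^{-p}$ on the left, so the absorption condition is $CC_1^{p_1}A^{p-p_1}<1$ --- and this is exactly where the hypothesis $p<p_1$ enters. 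One then chooses $\eta$ (the cutoff in the exceptional set $\{g>\eta\lambda\}$) small \emph{after} $A$, because the $F_{Q_j}$ piece contributes $\sim C_2^{p_0}\eta^{p_0}A^{-p_0}\sigma(Q_j)$, whose integrated coefficient $A^{p-p_0}$ has the wrong sign and must be beaten by $\eta^{p_0}$.

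A second point: $F\in L^{p_0}$ gives $\mathcal M_{2Q_0}(|F|^{p_0})\in L^{1,\infty}$, not $L^{p/p_0,\infty}$, so your a priori finiteness claim is incorrect, and truncating $F$ is awkward because the truncation need not satisfy \eqref{real-1}--\eqref{real-3}. The standard fix is simpler: integrate $\lambda^{p-1}\sigma(E(\lambda))$ over a finite range $(\Lambda\lambda_0, N)$, where every quantity is trivially finite, perform the absorption there (the absorbed term lives in $(A\Lambda\lambda_0,N)$ while the leftover $(\Lambda\lambda_0,A\Lambda\lambda_0)$ piece is bounded by $C(A\Lambda\lambda_0)^p|Q_0|$), obtain a bound uniform in $N$, and then let $N\to\infty$. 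Neither of these issues indicates a wrong approach, but as written the central inequality is false and the absorption step would collapse, so they would need to be repaired before the argument goes through.
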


\begin{proof}
This theorem  with $p_0=1$ was formulated and proved in \cite[Theorem 3.2 and Remark 3.3]{Shen-2007}.
Its proof was inspired by a paper of Caffarelli and Peral \cite{CP-1998}.
The case $p_0>1$ follows readily from the case $p_0=1$
by considering the functions $|F|^{p_0}$ and $|f|^{p_0}$.
\end{proof}

Assume $d\ge 3$.
To prove Theorem \ref{main-theorem-1},
we fix $f\in C_0^\infty(\R^d; \mathbb{C}^m)$. By the assumption of the theorem,
there exists a weak solution  $u\in H^1_{loc}(\Omega; \mathbb{C}^m)$ 
to the elliptic system $\mathcal{L} (u)=0$ in $\Omega$ such that
$u=f$ on $\partial\Omega$ in the sense of trace and
$
\|N(u)\|_{L^{p_0}(\partial\Omega)}\le C_0 \| f\|_{L^{p_0}(\partial\Omega)},
$
where $1<p_0< \frac{2(d-1)}{d-2}$.
We need to show that $\|N (u)\|_{L^p(\partial\Omega)}\le C \| f\|_{L^p(\partial\Omega)}$
for $p_0<p< \frac{2(d-1)}{d-2} +\e$, where $\e>0$ depends only on $d$, $m$, $p_0$, $p$, $M$ and $C_0$.

To this end we fix $Q_0=Q(0, R)$, a surface cube centered at the origin with side length $R$.
Let $Q=Q(z, r)\subset Q_0$ be a  surface cube centered at $z$ with side length $r\le \beta R$, 
where $\beta\in (0,1)$ is sufficiently small.
Let $g =\varphi f$, where $\varphi$ is a smooth cut-off function such that $0\le \varphi\le 1$,
$\varphi=1$ in $\Delta_{\gamma^2  r}(z)$,  and $\varphi=0$ in $\partial\Omega\setminus \Delta_{2\gamma^2 r}(z)$,
 where  
 $$
 2Q\subset \Delta_{\gamma r}\subset \Delta_{ 2\gamma^2 r} (z)\subset 2Q_0
 $$
  and $\gamma=\gamma (M)>1$ is  large.
By the assumption there exists a weak solution $v$ to 
$\mathcal{L} (v)=0$ in $\Omega$ such that $v= \varphi f$ on $\partial\Omega$ and
\begin{equation}\label{g-1}
\| N(v)\|_{L^{p_0}(\partial\Omega)} \le C_0 \| \varphi f\|_{L^{p_0}(\partial\Omega)}.
\end{equation}
Let $w=u-v$ and define
\begin{equation}\label{g-2}
F=N(u), \quad F_Q = N(v) \quad \text{ and } \quad R_Q=  N(w).
\end{equation}
Using  $N(u)\le N(v) + N(w)$, we obtain (\ref{real-1}).
To verify (\ref{real-3}), we use the estimate (\ref{g-1}) to obtain 
\begin{equation}\label{g-3}
\aligned
\left(\fint_{2Q} |F_Q|^{p_0}\, d\sigma\right)^{1/p_0}
&\le C \left(\frac{1}{|Q|}\int_{\partial\Omega} |N(v)|^{p_0}\, d\sigma\right)^{1/p_0}\\
& \le C\left( \frac{1}{|Q|} \int_{\Delta_{2\gamma^2 r} (z)} |f|^{p_0}\, d\sigma\right)^{1/p_0} \\
& \le C \sup_{2Q_0\supset Q^\prime\supset Q} \left(\fint_{Q^\prime} |f|^{p_0}\, d\sigma\right)^{1/p_0}.
\endaligned
\end{equation}

To verify (\ref{real-2}), we  use Theorem \ref{RH-theorem}.
Observe that $\mathcal{L}(w)=0$ in $\Omega$
and $w=0$ on $\Delta_{\gamma^2 r}(z)$. By choosing $\gamma=\gamma(M)>1$ sufficiently large,
it follows from (\ref{RH-2})  as well as Remark \ref{self} that 
\begin{equation}\label{g-9}
\left(\fint_{\Delta_{\gamma r} (z)} |N^{4k\gamma r} (w)|^{\overline{q}}\, d\sigma \right)^{1/\overline{q}}
\le C
\fint_{\Delta_{2\gamma r} (z)} N^{4k\gamma r} (w)\, d\sigma, 
\end{equation}
where $\overline{q}=\frac{2(d-1)}{d-2} +\e$ and $\e>0$ depends only on 
$d$, $m$, $M$ and $C_0$.
Note that for any $y\in \Delta_{\gamma r} (z)$,
\begin{equation}\label{g-10}
N (w) (y) \le N^{4k\gamma r} (w) (y)
+ C \fint_{\Delta_{2\gamma r} (z)} N (w)\, d\sigma
\end{equation}
(see the proof of (\ref{RH-3.01})). This, together with (\ref{g-9}), yields
\begin{equation}\label{g-11}
\left(\fint_{\Delta_{\gamma r} (z)} |N (w)|^{\overline{q}}\, d\sigma \right)^{1/\overline{q}}
\le C
\fint_{\Delta_{2\gamma r} (z)} N (w)\, d\sigma.
\end{equation}
Hence,
\begin{equation}\label{g-12}
\aligned
\left(\fint_{2Q} |R_Q|^{\overline{q}} \, d\sigma \right)^{1/\overline{q}}
&\le C\left(\fint_{\Delta_{\gamma r} (z)} |N (w)|^{\overline{q}}\, d\sigma \right)^{1/\overline{q}}\\
&\le C \left(\fint_{\Delta_{2\gamma r} (z)} |N(w)|^{p_0} \, d\sigma \right)^{1/p_0}\\
&\le C \left(\fint_{\Delta_{2\gamma r}(z)} |N(u)|^{p_0} \, d\sigma \right)^{1/p_0}
+C \left(\fint_{\Delta_{2\gamma r}(z)} |N(v)|^{p_0} \, d\sigma \right)^{1/p_0}\\
&\le C \left(\fint_{\alpha Q} |F|^{p_0} \, d\sigma \right)^{1/p_0}
+ C \sup_{2Q_0\supset Q^\prime\supset Q} \left(\fint_{Q^\prime} |f|^{p_0}\, d\sigma\right)^{1/p_0},
\endaligned
\end{equation}
where $\alpha Q\supset \Delta_{2\gamma r}(z)$ and
we have used (\ref{g-1}) for the last inequality.

To summarize, we have verified the conditions in Theorem \ref{real-theorem}.
As a result, we may conclude that
\begin{equation}\label{g-13}
\left(\fint_{Q_0} |N(u)|^p\, d\sigma \right)^{1/p}
\le C \left(\fint_{2Q_0} |N(u)|^{p_0}\, d\sigma\right)^{1/p_0}
+ C \left(\fint_{2Q_0} |f|^p\, d\sigma \right)^{1/p}
\end{equation}
for any $p_0<p< \frac{2(d-1)}{d-2} +\e$.
It follows that
$$
\aligned
\left(\int_{Q_0} |N(u)|^p\, d\sigma \right)^{1/p}
 & \le C |Q_0|^{(d-1)(\frac{1}{p}-\frac{1}{p_0})}
\left(\int_{2Q_0} |N(u)|^{p_0}\, d\sigma\right)^{1/p_0}
+ C \left(\int_{2Q_0} |f|^p\, d\sigma \right)^{1/p}\\
&\le C |Q_0|^{(d-1)(\frac{1}{p}-\frac{1}{p_0})} \| f\|_{L^{p_0} (\partial\Omega)}
+ C \| f\|_{L^p(\partial\Omega)}.
\endaligned
$$
By letting the side length of $Q_0$ go to infinity in the inequalities above, we obtain 
the desired estimate 
$\|N(u)\|_{L^p(\partial\Omega)} \le C \| f\|_{L^p(\partial\Omega)}$. 

Finally, note that if $d=2$, the same argument yields  the estimate (\ref{estimate-1.0})
for $p_0<p<\infty$.



\section{\bf Proof of Theorem \ref{main-theorem}}

Theorem \ref{main-theorem} follows from the proof of Theorem \ref{main-theorem-1}
by a simple localization technique.
Fix $z\in \partial\Omega$.
Let $r_0=\text{diam}(\Omega)$ and $r=c_0 r_0$, where $c_0>0$ is sufficiently small such that 
$$
B(z, r)\cap\Omega =B(z, r) \cap \big\{ (x^\prime, x_d):\,  x_d>\psi (x^\prime) \big\}
$$
in a new coordinate system, obtained from the standard system through translation and rotation.
It follows from the estimate (\ref{g-13}) that 
\begin{equation}\label{b-1}
\aligned
\left(\int_{B(z, c_1 r)\cap \partial\Omega} |N(u)|^p\, d\sigma\right)^{1/p}
 & \le C r_0^{(d-1)(\frac{1}{p}-\frac{1}{p_0})} 
\|N(u)\|_{L^{p_0} (\partial\Omega)} 
+ C \| f\|_{L^p(\partial\Omega)}\\
& 
\le C r_0^{(d-1)(\frac{1}{p}-\frac{1}{p_0})} 
\|f \|_{L^{p_0} (\partial\Omega)} 
+ C \| f\|_{L^p(\partial\Omega)}\\
& \le C \| f\|_{L^p(\partial\Omega)},
\endaligned
\end{equation}
where $c_1=c_1(\Omega)>0$ is small
and we have used H\"older's inequality as well as the fact $|\partial\Omega|\le C r_0^{d-1}$ for the last step.
By covering $\partial\Omega$ with a finite number of balls $\{B(z_\ell, c_1 r)\}$ we obtain the estimate
(\ref{estimate-1.0}).

 \bibliographystyle{amsplain}
 
\bibliography{extra-2017.bbl}

\bigskip

\begin{flushleft}
Zhongwei Shen,
Department of Mathematics,
University of Kentucky,
Lexington, Kentucky 40506,
USA.

E-mail: zshen2@uky.edu
\end{flushleft}

\bigskip

\medskip

\end{document}